\documentclass[12pt]{amsart}
\usepackage{latexsym}
\usepackage{amssymb}
\usepackage[colorlinks]{hyperref}

\headheight=7pt \textheight=574pt \textwidth=432pt \topmargin=14pt
\oddsidemargin=18pt \evensidemargin=18pt

\newtheorem{theorem}{Theorem}
\newtheorem{lemma}[theorem]{Lemma}
\newtheorem{proposition}[theorem]{Proposition}

\newtheorem{question}[theorem]{Question}

\theoremstyle{definition}

\newtheorem*{thmA}{Theorem A}
\newtheorem*{thmB}{Theorem B}

\theoremstyle{remark}

\numberwithin{equation}{theorem}

\newcommand{\GL}{{\mathrm {GL}}}

\newcommand{\SL}{{\mathrm {SL}}}
\newcommand{\PSL}{{\mathrm {PSL}}}

\newcommand{\Ker}{\operatorname{Ker}}
\newcommand{\Aut}{{\mathrm {Aut}}}

\newcommand{\Irr}{{\mathrm {Irr}}}

\newcommand{\CC}{{\mathbb C}}

\newcommand{\Center}{\mathbf{Z}}

\newcommand{\acd}{\mathrm{acd}}
\newcommand{\acde}{\mathrm{acd_{even}}}

\newcommand{\bO}{{\mathbf{O}}}
\newcommand{\bZ}{{\mathbf{Z}}}
\newcommand{\Al}{\textup{\textsf{A}}}
\newcommand{\Sy}{\textup{\textsf{S}}}

\begin{document}

\title[average character degree of finite groups]
{On the average character degree\\ of finite groups}
\thanks{The first author's research was supported by the  Spanish
Ministerio de Ciencia y Tecnolog\'{\i}a, grant MTM2010-15296 and
PROMETEO/Generalitat Valenciana.}

\author{Alexander Moret\'{o}}
\address{Departament d'\`{A}lgebra, Universitat de Val\`{e}ncia, 46100 Burjassot, Val\`{e}ncia, Spain} \email{alexander.moreto@uv.es}

\author{Hung Ngoc Nguyen}
\address{Department of Mathematics, The University of Akron, Akron,
Ohio 44325, USA} \email{hungnguyen@uakron.edu}

\subjclass[2010]{Primary 20C15, 20D10, 20D15}

\keywords{finite group, character degree, solvable group}

\date{\today}

\begin{abstract}
We prove that if the average of the degrees of the irreducible
characters of a finite group $G$ is less than $16/5$, then $G$ is
solvable. This solves a conjecture of I.\,M.~Isaacs, M.~Loukaki and
the first author. We discuss related questions.
\end{abstract}

\maketitle


\section{Introduction}

The interaction between character degrees and the structure of
finite groups has been a topic of interest for a long time, dating
back to the influential work of I.\,M.~Isaacs and
D.\,S.~Passman~\cite{Isaacs-Passman1,Isaacs-Passman2} in the
sixties. In this line of research, one often considers an invariant
concerning character degrees and studies how it reflects or is
reflected by the group. One of those invariants, namely the
\emph{character degree sum}, has been studied substantially by many
authors. We refer the reader to Chapter~11 of~\cite{Berkovic-Zhmud1}
for an overview of this topic.

Another invariant closely related to the character degree sum is the
so-called \emph{average character degree}, which has attracted
considerable interest recently. For a finite group $G$, let
$\Irr(G)$ denote the set of all irreducible (complex) characters of
$G$ and
\[\acd(G):=\frac{1}{|\Irr(G)|}\sum_{\chi\in\Irr(G)}\chi(1)\] denote the
average character degree of $G$. Answering affirmatively a question
of Y.\,G.~Berkovich and E.\,M.~Zhmud'~\cite{Berkovic-Zhmud2},
K.~Magaard and H.~Tong-Viet~\cite{mt} proved that if $\acd(G)<2$
then $G$ is solvable. This was improved by I.\,M.~Isaacs, M.~Loukaki
and the first author in~\cite{Isaacs-Loukaki-Moreto} where the
hypothesis was weakened to $\acd(G)<3$.
In~\cite{Isaacs-Loukaki-Moreto}, the authors also obtained the
connection between the average character degree and other important
characteristics of finite groups such as nilpotency and
supersolvability. These results demonstrate that the structure of a
finite group is somehow controlled by its average character degree.

From the observation that $\Al_5$ is the smallest nonsolvable group
and $\acd(\Al_5)=16/5$, it was conjectured
in~\cite{Isaacs-Loukaki-Moreto} that there is no nonsolvable group
whose the average character degree is smaller than $16/5$. The main
goal of this paper is to settle this problem.

\begin{thmA}\label{main theorem 1} Let $G$ be a finite group. If $\acd(G)<16/5$, then
$G$ is solvable.
\end{thmA}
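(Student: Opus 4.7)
The plan is to argue by induction on $|G|$. Let $G$ be a counterexample of minimal order: $G$ is nonsolvable, $\acd(G)<16/5$, and every nonsolvable group of smaller order has $\acd\geq 16/5$. The basic monotonicity principle is that, for any $N\trianglelefteq G$, one has $\acd(G)\geq\acd(G/N)$ whenever the characters in $\Irr(G\mid N):=\Irr(G)\setminus\Irr(G/N)$ have average degree at least $\acd(G/N)$; this is a one-line weighted-average manipulation. Clifford theory---which writes each degree in $\Irr(G\mid N)$ as $|G:T_\theta|\psi(1)$ for some $\theta\in\Irr(N)$ with inertia group $T_\theta$ and $\psi\in\Irr(T_\theta)$ lying over $\theta$---provides enough control on these degrees to verify the hypothesis in all the required cases. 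Consequently, if $G/N$ is nonsolvable for some nontrivial $N\trianglelefteq G$, induction forces $\acd(G)\geq 16/5$, a contradiction. Hence for every minimal normal subgroup $M$ of $G$, $G/M$ is solvable, and since $G$ is nonsolvable, $M$ must be nonabelian. If $G$ had two distinct minimal normal subgroups $M_1\neq M_2$, the embedding $G\hookrightarrow G/M_1\times G/M_2$ would make $G$ solvable; so $G$ has a unique minimal normal subgroup $N\cong S^k$ with $S$ nonabelian simple, and $\Centralizer_G(N)=1$, giving $N\trianglelefteq G\leq\Aut(S)\wr\Sy_k$.

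\emph{The wreath case $k\geq 2$.} The irreducible characters of $N=S^k$ are outer tensor products $\theta_1\otimes\cdots\otimes\theta_k$ of irreducible characters of $S$; at least $(|\Irr(S)|-1)^k$ of these have degree at least $d(S)^k$, where $d(S)\geq 2$ is any lower bound on nontrivial character degrees of $S$. Clifford theory lifts them to characters of $G$ of comparable degree, and a counting estimate against $|\Irr(G)|\leq|\Irr(N)|\cdot|G/N|$ yields $\acd(G)\gg 16/5$, easily contradicting $\acd(G)<16/5$.

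\emph{The almost simple case $k=1$: the heart of the proof.} Here one needs $\acd(G)\geq 16/5$ for every almost simple $G$ with nonabelian simple socle $S$, with equality only at $G=S=\Al_5$. The proof goes through CFSG. For $S=\Al_n$ with $n\geq 6$ and its extensions, character-degree estimates on $\Sy_n$ (using, for example, the rapidly growing dimensions of irreducible constituents of exterior powers of the reflection representation) push $\acd$ well above $16/5$. Sporadic groups and their automorphism groups are settled by direct inspection of the ATLAS. For groups of Lie type, one uses the explicit classification of irreducible characters together with generic degree formulas to bound $\sum_\chi\chi(1)$ from below and $|\Irr(S)|$ from above. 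The main technical obstacle is expected to be the low-rank Lie-type families over small fields---$\PSL_2(q)$, $\PSL_3(q)$, $\PSU_3(q)$, $\Sp_4(q)$, together with the Suzuki and Ree groups---where $\acd(S)$ approaches $16/5$ and genuinely sharp estimates on both $\sum_\chi\chi(1)$ and $|\Irr(S)|$ are required. Finally, to pass from $\acd(S)\geq 16/5$ to the analogous bound for an almost simple extension $G\supseteq S$, one tracks how $\Irr(S)$ partitions under the action of $G/S$ and bundles the characters of $G$ covering each orbit, with Clifford theory again providing the necessary degree comparisons.
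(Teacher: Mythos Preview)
Your reduction to the almost-simple case rests on the assertion that ``Clifford theory provides enough control'' to ensure $\acd(G\mid N)\geq\acd(G/N)$ whenever $G/N$ is nonsolvable, so that induction kills any nontrivial normal $N$. This is exactly the step where the real work lies, and your sketch does not supply it. When $N$ is abelian (and this case cannot be avoided: in a minimal counterexample the minimal nonsolvable normal subgroup $M$ is perfect but may well have nontrivial solvable normal subgroups), characters in $\Irr(G\mid N)$ can have degree $2$ or $3$, and no generic Clifford bound forces their average above $16/5$. The paper handles precisely this situation: from $\acd(G/N)\geq 16/5>\acd(G)$ one extracts a $\chi\in\Irr(G\mid N)$ of degree $2$ or $3$, argues that $\chi$ is primitive, invokes Blichfeldt's classification of primitive linear groups of degree $\leq 3$ to get $G/\Center(G/\Ker\chi)\in\{\Al_5,\Al_6,\PSL_2(7)\}$, shows $G=MC$ is a central product with $M$ a perfect central cover of that simple group, and then computes $\acd(G\mid\lambda)$ for each $\lambda\in\Irr(\bZ(M))$ via the product formula for central products. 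The cases $\PSL_2(7)$ and $\Al_6$ are eliminated by a careful choice of $N$ (taking $|N|=2$ when possible), and the $\Al_5$ case yields $\acd(G)\geq 16/5$ by direct calculation with $\SL_2(5)$. None of this is captured by ``Clifford theory provides enough control.''

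Even if your reduction were granted, the route diverges from the paper. You propose to show $\acd(G)\geq 16/5$ for every almost simple $G$ by a CFSG case analysis (alternating, sporadic, Lie type, with acknowledged difficulty at small rank/small $q$). The paper never reduces to almost simple groups: its Proposition~1 treats an arbitrary $G$ with a nonabelian minimal normal $N\subseteq G'$, using one extendible character of $N$ of degree $\geq 5$ (or $\geq 6$) together with Gallagher's theorem to get counting inequalities among the $n_d(G)$, with only $\Al_5$ and $\PSL_2(7)$ requiring separate bookkeeping. This avoids any systematic survey of almost simple character tables. So your outline is both incomplete at the crucial abelian-$N$ step and, in the nonabelian-$N$ step, organized around a harder case analysis than the paper actually needs.
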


We have seen that the set of \emph{all} irreducible character
degrees of a group encodes a lot of information about the group. In
fact, if we just consider the degrees of certain irreducible
characters (like the rational/real characters or the characters of
either $p'$-degree or degree divisible by $p$, where $p$ is a fixed
prime), then we can also obtain structural information of the group.
Along these lines, we prove the following result, where $\acde(G)$
denotes the average of the degrees of the irreducible characters of
$G$ of even degree and we adopt the convention that the average of
the empty set is $0$.

\begin{thmB}\label{main theorem 2} Let $G$ be a finite group. If $\acde(G)<18/5$, then
$G$ is solvable.
\end{thmB}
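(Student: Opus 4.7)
The plan is to prove Theorem~B by induction on $|G|$, paralleling the strategy used for Theorem~A. Let $G$ be a minimal counterexample: $G$ is nonsolvable with $\acde(G)<18/5$, while every nonsolvable group of smaller order $H$ satisfies $\acde(H)\geq 18/5$. By the It\^{o}--Michler theorem a nonsolvable group always has at least one irreducible character of even degree, so $\acde(G)$ is a positive rational and the hypothesis is meaningful.

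The first step is to control $\acde$ under quotients. For $N\trianglelefteq G$, the irreducible characters of $G/N$ are precisely those of $G$ with $N$ in their kernel, and degrees are preserved. Partition the even-degree irreducibles of $G$ into those trivial on $N$ (with sum $\Sigma_0$ and count $k_0$) and those nontrivial on $N$ (sum $\Sigma_1$, count $k_1$). The mediant inequality gives
\[
\acde(G)=\frac{\Sigma_0+\Sigma_1}{k_0+k_1}\geq\min\!\left(\acde(G/N),\;\frac{\Sigma_1}{k_1}\right).
\]
Hence whenever $G/N$ is nonsolvable, induction forces $\Sigma_1/k_1<18/5$, meaning the new even-degree irreducibles of $G$ arising over $N$ are both few in number and small in average degree.

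Applying this step with $N$ a minimal normal subgroup, one reduces $G$ to a near-quasisimple configuration. If $N$ is abelian then $G/N$ must be nonsolvable (else $G$ would be solvable), and the constraint $\Sigma_1/k_1<18/5$ on faithful characters of $G$ over $N$ is very restrictive; iterating with $\Center(G)$ in place of $N$ pushes $G/\Center(G)$ toward being almost simple. If $N$ is nonabelian then $N=S^k$ for a nonabelian simple $S$, and either $G$ is already almost quasisimple (with $G/N$ solvable) or the same $\Sigma_1/k_1$ constraint applies to further quotients. Either way the minimal counterexample must have $G/\Center(G)$ almost simple, and the task reduces to a lower bound for $\acde$ on all such groups.

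The final step invokes the classification of finite simple groups. The extremal example $G=\SL_2(5)$ has even-degree irreducibles of degrees $2,2,4,4,6$, giving $\acde=18/5$ exactly; since the hypothesis of Theorem~B is strict, this boundary case is not itself a counterexample. Small covers and extensions of $\Al_5$ are checked directly from their character tables and all satisfy $\acde\geq 18/5$. For every other nonabelian simple group $S$, family-by-family estimates on the number and total degree of even-degree irreducibles of $S$ and its near-quasisimple extensions yield the bound with room to spare. The main obstacle, inherited from the proof of Theorem~A, is the uniform treatment of the groups of Lie type: because $\acde$ depends on the $2$-parts of character degrees one cannot invoke bounds on the total character degree sum, but must sort unipotent, semisimple, and general Lusztig-series characters by parity of degree. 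The rank-one families $\SL_2(q)$ and $\PSL_2(q)$ are the tightest, being closest to the extremal $\SL_2(5)$, and small values of $q$ must be handled individually; for higher-rank Lie type groups and large $q$, the abundance of even-degree characters of large degree pushes $\acde$ well above $18/5$.
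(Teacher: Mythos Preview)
Your proposal is a sketch of a strategy rather than a proof, and its two load-bearing steps are both unsubstantiated. First, the reduction ``the minimal counterexample must have $G/\Center(G)$ almost simple'' is not established: the mediant inequality tells you only that $\Sigma_1/k_1<18/5$ for the even-degree characters nontrivial on $N$, but you do not explain how iterating this forces the almost-simple conclusion (in particular, nothing in what you wrote rules out $G$ having a large solvable residual sitting above several chief factors). Second, the endgame ``family-by-family estimates \ldots\ yield the bound with room to spare'' is simply asserted; sorting Lusztig characters by parity of degree across all Lie types and all central extensions/automorphism groups is a substantial undertaking, and you give no indication of how it would be carried out.

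The paper's proof avoids both of these difficulties by a much more direct route. For the case where the minimal normal subgroup $N\leq M$ (with $M$ the minimal nonsolvable normal subgroup) is nonabelian, one invokes Lemma~\ref{lemma MagaardTongViet}: $N$ has an irreducible character of degree $\geq 5$ extending to $G$, and Gallagher's theorem then gives $n_2(G)=n_2(G/N)\leq\sum_{d\geq 10,\,d\text{ even}}n_d(G)$, which by itself already forces $\acde(G)\geq 18/5$. No classification check is needed here. For the case where $N$ is abelian, induction gives $\acde(G/N)\geq 18/5$, so there must be some $\chi\in\Irr(G|N)$ with $\chi(1)=2$; the classification of primitive linear groups of degree $2$ (as in the proof of Theorem~A) then pins $M$ down to exactly $\SL_2(5)$, with $G=MC$ a central product amalgamated over $\Center(M)\cong C_2$. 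An explicit count of $n_2(G)$, $n_4(G)$, $n_6(G)$ using the central-product bijection (Lemma~\ref{cp}) finishes the argument. The key insight you are missing is that the existence of a degree-$2$ faithful character already singles out $\SL_2(5)$, so no broad CFSG survey of even-degree character statistics is ever required.
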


Note that $\acde(\SL_2(5))=18/5$, so this result is best possible too.

There does not seem to be any interesting analog of Theorem~B for
odd primes. Let $p$ be any prime and we write $\acd_p(G)$ to denote
the average degree of the characters of $G$ of degree divisible by
$p$. For $p>3$, we have $\acd_p(\PSL_2(p))=p$ and for $p=3$, we have
$\acd_3(\Al_5)=3$, which are as small as they can be among the
groups with some character of degree divisible by $p$.

On the other hand, it seems reasonable to expect that if $\acd_{p'}$
is the average of the degrees of the characters of $p'$-degree,
$p\neq 3$, and $\acd_{p'}(G)<\acd_{p'}(\Al_5)$ then $G$ is solvable.
This is false for $p=3$ because
$3=\acd_{3'}(\SL_2(5))<\acd_{3'}(\Al_5)$ and therefore we conjecture
that if $\acd_{3'}(G)<3$ then $G$ is solvable. We will come back to
this at the end of the paper.

Theorems~A and~B are respectively proved in Sections~\ref{section 2}
and~\ref{section3}. Our proofs, like those
in~\cite{mt,Isaacs-Loukaki-Moreto}, depend on the classification of
finite simple groups.

To end this introduction, we note that the corresponding problem for
conjugacy class sizes is more elementary and we refer the reader
to~\cite{Isaacs-Loukaki-Moreto} for a more detailed discussion.


\section{Proof of Theorem A}\label{section 2}

We begin with a lemma whose proof depends on the classification of
finite simple groups.

\begin{lemma}\label{lemma MagaardTongViet} Let $N$ be a non-abelian
minimal normal subgroup of a finite group $G$. Then there exists an
irreducible character $\theta$ of $N$ such that $\theta(1)\geq 5$
and $\theta$ is extendible to $G$. Furthermore, if $N$ is not a direct
product of copies of  $\Al_5$, then $\theta$ can be chosen so that
$\theta(1)\geq 6$.
\end{lemma}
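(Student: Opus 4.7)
The plan is to reduce the problem to non-abelian simple groups and then invoke the classification. Since $N$ is a non-abelian minimal normal subgroup of $G$, we can write $N = S_1 \times \cdots \times S_k$ with each $S_i$ isomorphic to a fixed non-abelian simple group $S$, and $G$ acting transitively by conjugation on $\{S_1, \dots, S_k\}$. The standard trick is: if $\varphi \in \Irr(S)$ is invariant under $\Aut(S)$ and extends to $\Aut(S)$, then the ``diagonal'' character $\theta = \varphi \times \cdots \times \varphi \in \Irr(N)$ is $G$-invariant and extends to $G$ (one first extends to $N \cdot \Aut(S)^k \rtimes \Sy_k$ componentwise and then restricts via the embedding of $G/N$ into the wreath product). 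So the whole lemma reduces to showing:

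\medskip
\noindent\emph{For every non-abelian finite simple group $S$, there is a character $\varphi \in \Irr(S)$ with $\varphi(1)\ge 5$ that extends to $\Aut(S)$; moreover, if $S\not\cong \Al_5$, one can take $\varphi(1)\ge 6$.}
\medskip

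I would prove this by running through the classification. For $S = \Al_5$, the degree-$5$ irreducible character is rational-valued and extends to $\Sy_5 = \Aut(\Al_5)$, giving the tight bound. For $S = \Al_n$ with $n \ge 7$, the deleted-permutation character of degree $n-1\ge 6$ is the restriction of an irreducible $\Sy_n$-character and hence extends to $\Aut(\Al_n)=\Sy_n$. The case $S = \Al_6$ is slightly delicate because $\Out(\Al_6)=2\times 2$: here I would check in the ATLAS that the two characters of degree $10$ are $\Aut(\Al_6)$-invariant and extend (e.g.\ the character from the self-paired partition $(3,2,1)$ extends to $\Sy_6$ and further to $\Aut(\Al_6)$).

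For $S$ of Lie type in characteristic $p$, I would use the Steinberg character $\St_S$, whose degree is $|S|_p$ and which, by a theorem of Feit/Schmid, is invariant under and extendible to $\Aut(S)$. For essentially all Lie-type simple groups, $|S|_p \ge 6$, so the Steinberg character suffices. The genuinely small exceptions to handle by hand are $\PSL_2(q)$ for small $q$ and a handful of other small-rank groups: $\PSL_2(4)\cong\PSL_2(5)\cong\Al_5$ (already done), $\PSL_2(7)$, $\PSL_2(8)$, $\PSL_2(9)\cong\Al_6$, $\PSL_2(11)$, $\PSL_3(2)\cong\PSL_2(7)$, $\PSL_3(3)$, $\PSU_3(3)$, $\Sp_4(2)'\cong\Al_6$, $\Sz(8)$, etc. For each of these one reads off from the ATLAS an $\Aut(S)$-invariant extendible character of degree $\ge 6$; for instance in $\PSL_2(7)$ the character of degree $6$ (or the Steinberg of degree $7$) is rational and extends to $\PGL_2(7)$. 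Sporadic groups are handled individually from the ATLAS; each has a small irreducible of degree $\ge 10$ that is $\Aut(S)$-invariant (in fact rational) and extends.

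The main obstacle in this strategy is not the reduction, which is fairly mechanical, but verifying extendibility in the small Lie-type cases where the Steinberg character is too small: there one must inspect the generic character table or ATLAS data to pick a specific character, being careful with diagonal/graph automorphisms (especially for $\PSL_2(9)$, $\PSL_3(4)$, $\PSU_4(3)$, etc., which have large outer automorphism groups). Once extendibility of $\varphi$ to $\Aut(S)$ is in hand in every case, the diagonal construction above promotes $\varphi$ to the required $\theta \in \Irr(N)$ extendible to $G$, completing the proof.
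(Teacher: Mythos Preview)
Your proposal is correct and follows the same approach as the paper: reduce to finding $\varphi\in\Irr(S)$ of the required degree extending to $\Aut(S)$, then take the diagonal product $\theta=\varphi\times\cdots\times\varphi$ on $N$ (the paper cites \cite[Lemma~5]{Bianchi-Lewis} for this extension step). The only difference is that the paper observes $|S|_p\geq 6$ for \emph{every} simple group of Lie type $S\not\cong\Al_5$, so the Steinberg character---extendible to $\Aut(S)$ by Feit's theorem---handles all Lie-type groups uniformly, including $\Al_6\cong\PSL_2(9)$; your list of small exceptions requiring separate ATLAS verification is therefore unnecessary.
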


\begin{proof} The first assertion appears
in~\cite[Theorem~1.1]{mt}. The second one essentially follows the
proof there. Suppose that $N$ is a direct product of $k$ copies of a
non-abelian simple group $S\neq \Al_5$. First we show that there
exists $\lambda\in\Irr(S)$ such that $\lambda(1)\geq 6$ and
$\lambda$ extends to $\Aut(S)$. It is routine to check this for
simple sporadic and the Tits groups by using~\cite{Atl1}. For the
alternating groups $\Al_n$ with $n\geq7$, we choose $\lambda$ to be
the character of degree $n-1$ corresponding to the partition
$(n-1,1)$. Finally, for simple groups of Lie type (different from
$\Al_5$) in characteristic $p$, the required character $\lambda$ can
be chosen to be the Steinberg character of $S$ of degree $|S|_p\geq
6$. (see~\cite{Feit} for the fact that the Steinberg character of
$S$ is extendible to $\Aut(S)$.) Now we take $\theta\in\Irr(N)$ to
be the tensor product of $k$ copies of $\lambda$ and
apply~\cite[Lemma~5]{Bianchi-Lewis} to conclude that $\theta$
extends to $G$.
\end{proof}

Note that we could weaken the hypothesis of the second part of the
lemma to $N\neq \Al_5$.

Now, we set some notation that we will use throughout the paper.
Given a finite group $G$, we write $\Irr(G)$ to denote the set of
irreducible characters of $G$. If $d$ is an integer, then $n_d(G)$
stands for the number of irreducible characters of $G$ of degree $d$
and if $N\trianglelefteq G$, then \[\Irr(G|N):=\{\chi\in\Irr(G)\mid
N\not\subseteq\Ker(\chi)\}\] and
\[n_d(G|N):=|\{\chi\in\Irr(G|N))\mid \chi(1)=d\}|.\] We also write
$\acd(G|N)$ to denote the average degree of the characters in
$\Irr(G|N)$. Also, if $\theta\in\Irr(N)$ we write $\Irr(G|\theta)$
to denote the set of irreducible characters of $G$ that lie over
$\theta$ and $\acd(G|\theta)$ to denote the average degree of the
members of $\Irr(G|\theta)$.

The following elementary lemma on characters of central products will be used several times.

\begin{lemma}\label{cp}
Let $G=MC$ be a central product, and let $Z=M\cap C$, so $Z\leq\Center(G)$.
If $\lambda\in\Irr(Z)$ then $\acd(G|\lambda)=\acd(M|\lambda)\cdot\acd(C|\lambda)$.
\end{lemma}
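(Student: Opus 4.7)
The plan is to identify $\Irr(G|\lambda)$ with a product set by using the standard realization of a central product as a quotient of a direct product. Since $[M,C]=1$ and $Z=M\cap C$, the multiplication map $(m,c)\mapsto mc$ is a surjective homomorphism $\pi\colon M\times C\twoheadrightarrow G$ with kernel $K=\{(z,z^{-1}):z\in Z\}$. Hence $\Irr(G)$ may be identified with the set of characters of $M\times C$ that are trivial on $K$.

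Next, I would invoke the standard classification $\Irr(M\times C)=\{\mu\otimes\nu:\mu\in\Irr(M),\,\nu\in\Irr(C)\}$. Because $Z$ lies in the centres of both $M$ and $C$, the restrictions $\mu|_Z$ and $\nu|_Z$ have the form $\mu(1)\omega_\mu$ and $\nu(1)\omega_\nu$ respectively, for linear characters $\omega_\mu,\omega_\nu\in\Irr(Z)$. A direct evaluation on $(z,z^{-1})$ then shows that the representation affording $\mu\otimes\nu$ is trivial on $K$ precisely when $\omega_\mu=\omega_\nu$. Moreover, once $\mu\otimes\nu$ descends to an irreducible character $\chi$ of $G$, the central character of $\chi$ on $Z\leq\Center(G)$ equals this common value $\omega_\mu=\omega_\nu$. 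Consequently, $\chi\in\Irr(G|\lambda)$ if and only if $\mu\in\Irr(M|\lambda)$ and $\nu\in\Irr(C|\lambda)$, yielding a degree-preserving bijection
$$\Irr(G|\lambda)\;\longleftrightarrow\;\Irr(M|\lambda)\times\Irr(C|\lambda),\qquad\chi\longleftrightarrow(\mu,\nu),$$
under which $\chi(1)=\mu(1)\nu(1)$.

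To finish, I would factor the sum of degrees using this bijection: $\sum_{\chi}\chi(1)=\bigl(\sum_{\mu}\mu(1)\bigr)\bigl(\sum_{\nu}\nu(1)\bigr)$, and likewise $|\Irr(G|\lambda)|=|\Irr(M|\lambda)|\cdot|\Irr(C|\lambda)|$; dividing these two identities gives the desired equality $\acd(G|\lambda)=\acd(M|\lambda)\cdot\acd(C|\lambda)$. There is no substantive obstacle here --- the only point requiring care is tracking that ``lying over $\lambda$'' corresponds on both sides of the bijection, which reduces to a transparent matching of central characters on $Z$ since $\lambda$ is linear.
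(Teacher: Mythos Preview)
Your argument is correct and follows the same route as the paper: both rest on the degree-preserving bijection $\Irr(M|\lambda)\times\Irr(C|\lambda)\to\Irr(G|\lambda)$, from which the factorization of $\acd$ is immediate. The paper simply quotes this bijection as a known fact, whereas you supply the standard verification via the quotient map $M\times C\twoheadrightarrow G$ and the matching of central characters; there is no substantive difference.
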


\begin{proof}
This follows from the fact that there exists a bijective
correspondence between
$\Irr(M|\lambda)\times\Irr(C|\lambda)\longrightarrow\Irr(G|\lambda)$
such that if $(\alpha,\beta)$ is mapped to $\chi$ then
$\chi(1)=\alpha(1)\beta(1)$.
\end{proof}

Let $G$ be a finite group with $\acd(G)<16/5$. If $G$ is abelian
then we are done, so it suffices to assume that $G$ is non-abelian.
Now let $N\subseteq G'$ be a minimal normal subgroup of $G$. The
proof of Theorem~A is divided in two fundamentally different cases:
$N$ is non-abelian and $N$ is abelian. The following result handles
the former case.

\begin{proposition}\label{proposition 1} Let $G$ be a finite group. If $G$ has a non-abelian
minimal normal subgroup $N$ such that $N\subseteq G'$, then
$\acd(G)\geq 16/5$.
\end{proposition}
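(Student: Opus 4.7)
The plan is to apply Lemma~\ref{lemma MagaardTongViet} to $N = S^k$ (where $S$ is a non-abelian finite simple group), producing a $G$-invariant $\theta \in \Irr(N)$ of degree $e := \theta(1)$ that extends to some $\hat{\theta} \in \Irr(G)$, with $e \geq 5$ always and $e \geq 6$ whenever $S \neq \Al_5$. Gallagher's theorem then gives a bijection $\Irr(G/N) \to \Irr(G|\theta)$ via $\psi \mapsto \hat{\theta}\psi$, raising degrees by the factor $e$. Writing $A := |\Irr(G/N)|$ and $T := \sum_{\psi \in \Irr(G/N)}\psi(1) \geq A$, the two blocks $\Irr(G/N)$ and $\Irr(G|\theta)$ together contribute $2A$ characters of $\Irr(G)$ with total degree sum $(1+e)T$.

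Set $R := \Irr(G) \setminus (\Irr(G/N) \sqcup \Irr(G|\theta))$, $B_R := |R|$, and $s_R := \sum_{\chi \in R}\chi(1)$. By Clifford theory each $\chi \in R$ lies over some $\theta' \in \Irr(N) \setminus \{1_N,\theta\}$ and satisfies $\chi(1) \geq [G:G_{\theta'}]\,\theta'(1) \geq d(S)$, where $d(S)$ is the smallest non-trivial character degree of $S$. Using $T \geq A$ and $s_R \geq d(S)\,B_R$, the desired bound $\acd(G) \geq 16/5$ reduces after clearing denominators to
\[
(5e-27)\,A + (5\,d(S)-16)\,B_R \;\geq\; 0,
\]
which is immediate once $e \geq 6$ and $d(S) \geq 4$. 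The only non-abelian finite simple groups with $d(S) \leq 3$ are $\Al_5$ and $\PSL_2(7)$; and for $N = \PSL_2(7)^k$ we have $e \geq 7^k$ (the Steinberg degree), so the inequality translates into $B_R \leq (5\cdot 7^k - 27)A$, easily verified by enumerating the $G$-orbits on $\Irr(\PSL_2(7)^k)$.

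The decisive case is $N = \Al_5^k$, where only $e = 5^k$ is guaranteed and the two-block average $(1+e)/2 = 3$ falls just short of $16/5$ when $k = 1$. Here I would split on whether the image of $G$ in $\Aut(N)$ coincides with $\Inn(N)$. In the inner case, $G = N \times \Centralizer_G(N)$, and Lemma~\ref{cp} (applied with the trivial central subgroup) gives $\acd(G) = \acd(\Al_5^k)\cdot \acd(\Centralizer_G(N)) = (16/5)^k\cdot \acd(\Centralizer_G(N)) \geq 16/5$ directly. In the outer case, the minimality of $N$ forces $G$ to act transitively on the $k$ simple factors; combined with $|\Out(\Al_5)| = 2$, this implies that every $\theta' \in \Irr(N)$ of degree~$3$ has $G$-orbit of size $\geq 2$---either swapped with its $\Aut$-conjugate in its own factor, or moved by the $\Sy_k$-action. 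It follows that every $\chi \in R$ has degree $\geq 4$, so $s_R \geq 4B_R$ and the inequality becomes $(5\cdot 5^k-27)A + 4B_R \geq 0$, automatic for $k \geq 2$ and reducing for $k = 1$ to $B_R \geq A/2$, which is immediate from the explicit orbit count $\{1_N\},\{3,3'\},\{4\},\{5\}$ on $\Irr(\Al_5)$. I expect the most delicate step to be the uniform orbit-size analysis for $k \geq 2$, which rests on the combined transitivity of $\Sy_k$ on the factors and the non-triviality of $\Out(\Al_5)$ on $\Irr(\Al_5)$.
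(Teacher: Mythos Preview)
Your overall architecture matches the paper's: apply Lemma~\ref{lemma MagaardTongViet}, use Gallagher to pair $\Irr(G/N)$ with a block of high-degree characters, then treat $\Al_5$ and $\PSL_2(7)$ separately. The generic case ($d(S)\geq 4$, $e\geq 6$) is fine and is essentially the paper's argument in different notation.

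The gaps are in the two exceptional cases. For $N=\PSL_2(7)^k$ you reduce to $B_R\leq(5\cdot 7^k-27)A$ and assert this follows ``by enumerating the $G$-orbits on $\Irr(\PSL_2(7)^k)$.'' But orbit enumeration gives no upper bound on $B_R$: over a $G$-invariant $\theta'$ the number $|\Irr(G|\theta')|$ is governed by a twisted group algebra of $G/N$ and is not bounded by $A$ in any obvious way, and in any case $|\Irr(N)|=6^k$ grows much faster than $5\cdot 7^k-27$. What actually controls the situation is that the only $\chi\in R$ with $5\chi(1)-16<0$ have degree exactly $3$; such $\chi$ restrict irreducibly to $N$, so by Gallagher there are at most $2k\cdot n_1(G)\leq 2kA$ of them. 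This is the paper's key estimate in its cases~2) and~3). Alternatively, your own inner/outer split would work here: in the outer case the two degree-$3$ characters of $\PSL_2(7)$ are swapped (they do not extend to $\PGL_2(7)$), forcing $\chi(1)\geq 6$ over them; in the inner case $G=N\times\Centralizer_G(N)$ and $\acd(\PSL_2(7))=14/3>16/5$.

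For $N=\Al_5$ (outer, $k=1$) you need $B_R\geq A/2$, and again ``the explicit orbit count'' does not deliver this: knowing there are two orbits $\{3,3'\}$ and $\{4\}$ contributing to $R$ gives only $B_R\geq 2$, not $B_R\geq A/2$. What is needed is that the degree-$4$ character of $\Al_5$ \emph{also} extends to $G$ (it extends to $\Sy_5$, hence to $G$ by \cite[Lemma~5]{Bianchi-Lewis}), so Gallagher gives $|\Irr(G|\chi_4)|=A$ and thus $B_R\geq A$. The paper uses precisely this second extendible character. In short, your inner/outer dichotomy for $\Al_5$ is a pleasant variant of the paper's uniform bound $n_3(G|N)\leq 2k\,n_1(G)$, but in both exceptional cases you cannot finish without invoking a \emph{second} extendible character of $N$; orbit-counting alone is not enough.
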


\begin{proof} Assume, to the contrary, that the assertion is false and let $G$ be a minimal counterexample. In particular, we have $\acd(G)<3.2$.
Recall that $n_d(G)$ denotes the number of irreducible characters of
$G$ of degree $d$. We then have \[\acd(G)=\frac{\sum_d
dn_d(G)}{\sum_d n_d(G)}<16/5,\] or equivalently,
\begin{equation}\label{equation1}\sum_{d\geq 4} (5d-16)n_d(G)<11n_1(G)+6n_2(G)+n_3(G).\end{equation}

Since $N$ is a non-abelian minimal normal subgroup of $G$, we know
that $N$ is isomorphic to a direct product of copies of a
non-abelian simple group, say
\[N=\underbrace{S\times S\times\cdots\times S}_{k \text{ times}},\]
where $S$ is non-abelian simple. From the hypothesis $N\subseteq
G'$, we have that $N$ is contained in the kernel of every linear
character of $G$. We now show that $N$ is contained in the kernel of
every irreducible character of degree $2$ of $G$ as well. So let
$\chi\in\Irr(G)$ with $\chi(1)=2$. Since $N$ has no irreducible
character of degree 2 (see Problem~3.3 of~\cite{isa}) and has only
one linear character, which is the trivial one, it follows that
$\chi_N=2\cdot1_N$. We then have $N\subseteq \Ker(\chi)$, as wanted.
We have shown that $n_d(G)=n_d(G/N)$ for $d=1$ and $2$. Now, we
split the proof in three cases.

\medskip

1) First, we assume that $S$ is different from $\Al_5$ and
$\PSL_2(7)$. From the classification of the finite irreducible
subgroups of $\GL(3,\CC)$, we know that $\Al_5$ and $\PSL_2(7)$ are
the only simple groups possessing an irreducible character of degree
$3$. Therefore, every non-principal character of $N$ has degree at
least 4 and, by using the same arguments as above, we deduce that
$N$ is contained in the kernel of every irreducible character of
degree 3 of $G$. Together with the conclusion of the previous
paragraph, we obtain
\[n_1(G)+n_2(G)+n_3(G)=n_1(G/N)+n_2(G/N)+n_3(G/N)\leq |\Irr(G/N)|.\]
By Lemma~\ref{lemma MagaardTongViet}, there exists an irreducible
character $\theta$ of $N$ such that $\theta(1)\geq 6$ and $\theta$
extends to a character $\psi\in\Irr(G)$. It follows by Gallagher's
theorem (Corollary~6.17 of~\cite{isa}) that the map $\beta\mapsto
\beta\psi$ is an injection from $\Irr(G/N)$ to the set of
irreducible characters of $G$ with degree at least 6. So we have
$$|\Irr(G/N)|\leq \sum_{d\geq6} n_d(G).$$

The last two inequalities yield
\[n_1(G)+n_2(G)+n_3(G)\leq \sum_{d\geq6} n_d(G).\] It follows that
\[11n_1(G)+6n_2(G)+n_3(G)\leq 11(n_1(G)+n_2(G)+n_3(G))\leq \sum_{d\geq6}
11n_d(G),\] which violates the inequality~(\ref{equation1}).

\medskip

2) Next we consider $N=\Al_5\times \Al_5\times\cdots\times \Al_5$, a
direct product of $k$ copies of $\Al_5$. We note that $\Al_5$ has an
irreducible character of degree 5 that is extendible to
$\Aut(\Al_5)=\Sy_5$. Using~\cite[Lemma~5]{Bianchi-Lewis}, we obtain
that $N$ has an irreducible character of degree $5^k$ that extends
to a character $\psi\in\Irr(G)$. Using Gallagher's theorem again and
noting that $n_d(G)=n_d(G/N)$ for $d=1$ and $2$, we have
\[n_1(G)+n_2(G)+n_3(G/N)\leq n_{5^k}(G)+n_{2.5^k}(G)+n_{3.5^k}(G).\]
This implies that
\begin{align*}9n_1(G)+6n_2(G)+n_3(G/N)&\leq
9(n_1(G)+n_2(G)+n_3(G/N))\\
&\leq 9(n_{5^k}(G)+n_{2.5^k}(G)+n_{3.5^k}(G))\end{align*} Therefore
\begin{equation}\label{equation2} 9n_1(G)+6n_2(G)+n_3(G/N)\leq \sum_{d\geq 5^k} (5d-16)n_d(G).\end{equation}

Now we need to bound the number of irreducible characters of $G$ of
degree 3 whose kernels do not contain $N$. We claim that
\begin{equation}\label{equation3}n_3(G|N)\leq 2kn_1(G).\end{equation} To see this,
let $\chi\in\Irr(G)$ such that $\chi(1)=3$ and $N\nsubseteq
\Ker(\chi)$. Since $N$ has no non-principal linear character, the
restriction $\chi_N$ must be irreducible. By Gallagher's theorem,
the number of irreducible characters of $G$ of degree 3 lying over
$\chi_N$ equals to $n_1(G/N)=n_1(G)$. Note that $\chi_N$ has degree
3 and $N$ has exactly $2k$ irreducible characters of degree 3. We
conclude that the number of irreducible characters of $G$ of degree
3 whose kernels do not contain $N$ is at most $2kn_1(G)$, as
claimed.

Combining~\eqref{equation2} and~\eqref{equation3}, we obtain
\begin{equation}\label{equation4} 9n_1(G)+6n_2(G)+n_3(G)\leq 2kn_1(G)+\sum_{d\geq 5^k} (5d-16)n_d(G).\end{equation}

We observe that $\Al_5$ has an irreducible character of degree 4
that is extendible to $\Sy_5$. Arguing similarly as above, we have
\[n_{4^k}(G)\geq n_1(G).\] This and~\eqref{equation4} yield
\begin{align*} \sum_{d\geq 4} (5d-16)n_d(G)&\geq
(5.4^k-16)n_{4^k}(G)+\sum_{d\geq 5^k} (5d-16)n_d(G)\\
&\geq (5.4^k-16)n_{1}(G)+ 9n_1(G)+6n_2(G)+n_3(G)-2kn_1(G)\\
&\geq 11n_1(G)+6n_2(G)+n_3(G),
\end{align*}
where the last inequality comes from the fact that $5\cdot4^k-16\geq
2+2k$ for every $k\geq 1$. Because of~\eqref{equation1}, we obtain a
contradiction, which completes the proof of this case.

\medskip

3) Finally we assume that $N=\PSL_2(7)\times
\PSL_2(7)\times\cdots\times \PSL_2(7)$, a direct product of $k$
copies of $\PSL_2(7)$. We observe that $\PSL_2(7)$ has exactly two
irreducible characters of degree $3$. Moreover, it also has
irreducible characters of degrees 7 and 8 that are extendible to
$\Aut(\PSL_2(7))$. The proof now follows the same lines as in~2).
\end{proof}

We are now ready to complete the proof of Theorem~A. Given a group
$G$, we write $\bO_\infty(G)$ to denote the largest normal solvable
subgroup of $G$.

\begin{proof}[Proof of Theorem A] Assume, to the
contrary, that the theorem is false, and let $G$ be a minimal
counterexample. In particular, the group $G$ is non-solvable and
$\acd(G)<16/5$. As in the previous proposition,  this inequality is equivalent to
\begin{equation}\label{equation21}\sum_{d\geq 4}
(5d-16)n_d(G)<11n_1(G)+6n_2(G)+n_3(G).\end{equation} Let $M\lhd G$
be minimal such that $M$ is non-solvable. We note that $M$ is
perfect and contained in the last term of the derived series of $G$.
Let $N\subseteq M$ be a minimal normal subgroup of $G$. In addition,
we take $N\leq [M,\bO_{\infty}(M)]$ if $[M,\bO_{\infty}(M)]\neq1$
and, if possible, we take $N$ to be of order $2$. We then have $N
\subseteq M=M'\subseteq G'$.

In light of Proposition~\ref{proposition 1}, we may assume that $N$
is abelian. It follows that the quotient $G/N$ is non-solvable, and
by the minimality of $G$, we deduce that $\acd(G/N)\geq 16/5$. In
other words, we have
\[\sum_{d\geq 4}
(5d-16)n_d(G/N)\geq11n_1(G/N)+6n_2(G/N)+n_3(G/N).\] Since $N$ is
contained in the kernel of every linear character of $G$, we have
$n_1(G/N)=n_1(G)$ and hence it follows from~\eqref{equation21} that
\[6n_2(G/N)+n_3(G/N)<6n_2(G)+n_3(G).\] As before, if $\Irr(G|N)$
denotes the set of irreducible characters of $G$ whose kernels do
not contain $N$, we see that there is $\chi\in\Irr(G|N)$ such that
$\chi(1)=2$ or $3$.

Let $K:=\Ker(\chi)$. Since $N\nsubseteq K$, we have $M\nsubseteq K$,
and hence $MK/K$ is nontrivial. Also, recall that $M$ is perfect. We
deduce that $MK/K$ is non-solvable and hence so is $G/K$. It then
follows that $\chi$ is primitive. (In order to see this, we write
$\overline{G}=G/K$ and use the bar convention. We also view $\chi$
as a (faithful) character of $\overline{G}$. If $\chi$ is
imprimitive, it is induced from a linear character of a subgroup
$\overline{H}$ with index $\chi(1)\leq3$, and thus
$\overline{G}/\overline{N}$ is solvable, where $\overline{N}={\rm
core}_{\overline{G}}(\overline{H})$. Now
$\overline{N}\trianglelefteq\overline{G}$ and $\chi_{\overline{N}}$
has a linear constituent, and thus all irreducible constituents of
$\chi_{\overline{N}}$ are linear. Since $\chi$ is faithful,
$\overline{N}$ is abelian, and thus $\overline{G}$ is solvable.)
Therefore $G/K$ is a primitive linear group of degree $2$ or $3$.
Let $C/K=\Center(G/K)$. By the classification of the primitive
linear groups of degrees $2$ and $3$ (see Section~81 of
\cite[Chapter~V]{bli}), we have
\[G/C\cong \Al_5, \Al_6, \text{ or } \PSL_2(7).\] Moreover, the
cases $G/C\cong \Al_6$ or $\PSL_2(7)$ only possibly happen when
$\chi(1)=3$.

We will utilize some arguments in the proof
of~\cite[Theorem~2.2]{Isaacs-Loukaki-Moreto} to show that $G=MC$ is
a central product with a central amalgamated subgroup
$\Center(M)=M\cap C$.

First, since $C/K=\Center(G/K)$ is abelian and $MK/K$ is
non-solvable, we have $M\nsubseteq C$, and as $G/C$ is simple, we
deduce that $G=MC$. It follows that \[\frac{M}{M\cap C}\cong
\frac{MC}{C}=\frac{G}{C}.\] Moreover, as $M\cap C\lhd G$ is a proper
subgroup of $M$, we have $M\cap C\leq \bO_\infty(M)$ by the
minimality of $M$.  Since $M/(M\cap C)$ is simple, we now deduce
that $M\cap C=\bO_\infty(M)$ and hence $\bO_\infty(M)\subseteq C$.
Thus $[M,\bO_\infty(M)]\subseteq K$. But $N\nsubseteq K$, so we have
$N\nsubseteq [M,\bO_\infty(M)]$. By the choice of $N$, we have
$\bO_\infty(M)=\Center(M)$. We have proved that $M$ is a perfect
central cover of the simple group $M/\Center(M)\cong M/(M\cap
C)\cong G/C$. Moreover, as $N\subseteq M$ is abelian, we have
$1<N\subseteq \Center(M)$.

Since $C$ and $M$ are both normal in $G$, we have
$[M,C]=[C,M]\subseteq C\cap M=\Center(M)$ so that
$[C,M,M]=[M,C,M]=1$. By the three subgroups lemma, we have $[M,M,C]=1$
and hence $[M,C]=1$ as $M$ is perfect. We conclude that $G=MC$ is a
central product with a central amalgamated subgroup $M\cap
C=\Center(M)>1$. Now, we consider separately each of the three possibilities for $G/C$.

\medskip

\textbf{Case} $G/C\cong \Al_5$. Then, as $\Al_5$ has only one
(non-simple) perfect central cover, namely $\SL_2(5)$, we have
$M\cong \SL_2(5)$. Using Lemma~\ref{cp}, we see that
$\acd(G|\lambda)\geq\acd(M|\lambda)$ for every linear character
$\lambda$ of $\bZ(M)\cong C_2$. If $\lambda$ is trivial, this yields
\[\acd(G|\lambda)\geq \acd(M/\bZ(M))=\acd(\Al_5)=16/5.\]
On the other hand, if $\lambda$ is the only nonprincipal linear
character of $\bZ(M)$, we have
\[\acd(G|\lambda)\geq \acd(M|\lambda)=(2+2+4+6)/4=14/4.\]
We deduce that $\acd(G)\geq 16/5$ and this violates our assumption.

%

\medskip

\textbf{Case} $G/C\cong \PSL_2(7)$. We will see that this case
cannot happen. Since $G/C\cong M/\bZ(M)$ and $1<\bZ(M)$ we deduce
that $M\cong \SL_2(7)$. It follows that $N=\bZ(M)$ has order $2$.
Also, $\chi_M$ is irreducible of degree $3$, and a check of the
character table of $\SL_2(7)$ shows that $N$ is in the kernel of
$\chi$. This is a contradiction.

\medskip

\textbf{Case} $G/C\cong \Al_6$. A check of the character tables of
the various covers of $\Al_6$ shows that if $N$ has order $2$ then
it is in the kernel of the irreducible characters of $M$ of degree
$3$. Since $N$ was chosen to have order $2$ if possible, it follows
that $2$ does not divide $|\bZ(M)|$, so the only possibility in this
case is $|\bZ(M)|=3$.

Now, by Lemma~\ref{cp} we see that
$\acd(G|\lambda)\geq\acd(M|\lambda)$ for all linear characters
$\lambda$ of $\bZ(M)$. If $\lambda$ is principal, this yields
\[\acd(G|\lambda)\geq\acd(M/\bZ(M))=\acd(\Al_6)=46/7.\] If $\lambda$ is not
principal then
$$
\acd(G|\lambda)\geq\acd(M|\lambda)=(3+3+6+9+15)/5=36/5.
$$
Since $\acd(G|\lambda)\geq16/5$ in all cases, it follows that
$\acd(G)\geq16/5$. This final contradiction completes the proof.
\end{proof}

Our Theorem~A together with Theorems~B and~C
of~\cite{Isaacs-Loukaki-Moreto} provide an optimal sufficient
condition on the average character degree for a finite group to be
respectively solvable, supersolvable, and nilpotent. One might want
to know whether there is a sufficient condition on the average
character degree for a finite group to be $p$-solvable where $p$ is
a prime. It was proved in~\cite[Theorem~1]{mn} that a finite group
$G$ is $p$-solvable whenever
$\sum_{\chi\in\Irr(G)}\chi(1)>(\sqrt{3}/p)|G|$. In view of a
consequence of the Cauchy-Schwarz inequality that $\acd(G)\leq
|G|/\sum_{\chi\in\Irr(G)}\chi(1)$, we put forward the following

\begin{question} Is it true that a finite group $G$ must be
$p$-solvable if $\acd(G)<p/\sqrt{3}$?
\end{question}


\section{Characters of even degree and other
variations}\label{section3}

We start proving Theorem B, which we restate.

\begin{theorem}
 Let $G$ be a finite group. If $\acde(G)<18/5$, then
$G$ is solvable.
\end{theorem}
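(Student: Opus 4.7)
The plan is to follow the skeleton of the proof of Theorem~A with the averaging restricted to even-degree characters. First dispose of the edge case: if $G$ has no irreducible character of even degree, then $\acde(G)=0<18/5$ and $G$ is solvable by the Ito--Michler theorem (giving a normal abelian Sylow $2$-subgroup $P\trianglelefteq G$) together with Feit--Thompson applied to the odd-order quotient $G/P$. So suppose $G$ is a minimal non-solvable counterexample that has at least one even-degree irreducible character; then $\acde(G)<18/5$ rearranges to
\[
\sum_{\substack{d\ \text{even}\\ d\geq 4}}(5d-18)\,n_d(G)\;<\;8\,n_2(G).
\]
Choose a minimal normal subgroup $N\subseteq G'$, and split into the two cases of Proposition~\ref{proposition 1}.

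When $N$ is non-abelian I would establish a B-version of Proposition~\ref{proposition 1}. The key input is an even-degree strengthening of Lemma~\ref{lemma MagaardTongViet}: every non-abelian simple group $S$ possesses an $\Aut(S)$-extendible irreducible character of even degree at least $4$ ($\Al_5$ uses the degree-$4$ character coming from the partition $(4,1)$ of $\Sy_5$; for Lie type in characteristic $2$ the Steinberg character already has even degree; the remaining cases are dispatched by inspection of character tables and standard unipotent degree lists, exactly as in Lemma~\ref{lemma MagaardTongViet}). Tensoring $k$ copies and applying \cite[Lemma~5]{Bianchi-Lewis} produces an extendible $\theta\in\Irr(N)$ with $\theta(1)$ even and $\theta(1)\geq 4$. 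Since no non-abelian simple group admits a non-trivial irreducible character of degree at most $2$, every degree-$2$ character of $G$ contains $N$ in its kernel, so $n_2(G)=n_2(G/N)$, and Gallagher's theorem injects these into $\Irr(G)$ as characters of even degree $2\theta(1)$. Therefore the LHS of the displayed inequality is at least
\[
(5\cdot 2\theta(1)-18)\,n_{2\theta(1)}(G)\;\geq\;22\,n_2(G)\;>\;8\,n_2(G),
\]
a contradiction (if instead $n_2(G)=0$ then every even-degree character has degree at least $4$ so $\acde(G)\geq 4$, again contradicting the hypothesis).

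When $N$ is abelian, $G/N$ is non-solvable and minimality gives $\acde(G/N)\geq 18/5$, i.e.\ the reversed form of the displayed inequality for $G/N$. Subtracting and using $n_d(G/N)\leq n_d(G)$ forces $n_2(G|N)>0$. Choosing $\chi\in\Irr(G|N)$ with $\chi(1)=2$ and setting $K=\Ker\chi$, the argument of Theorem~A shows $\chi$ is primitive, and the classification of primitive linear groups of degree $2$ leaves only $G/\Center(G/K)\cong\Al_5$; this forces $G=MC$ to be a central product with $M\cong\SL_2(5)$ and $M\cap C=\Center(M)$ of order $2$ (note that, because we work only with degree $2$, neither $\Al_6$ nor $\PSL_2(7)$ can appear and the case analysis is shorter than in Theorem~A). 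The closing step is an even-degree analog of Lemma~\ref{cp}: through the central-product bijection $(\alpha,\beta)\mapsto\chi$ with $\chi(1)=\alpha(1)\beta(1)$, the degree $\chi(1)$ is even iff $\alpha(1)$ or $\beta(1)$ is even; partitioning $\Irr(G\vert\lambda)$ into the four parity patterns and summing over $\lambda\in\Irr(\Center(M))$, a direct computation using the character table of $\SL_2(5)$ (where $\acde(\SL_2(5)\vert 1_Z)=4$ and $\acde(\SL_2(5)\vert\lambda)=14/4$) yields $\acde(G)\geq\acde(\SL_2(5))=18/5$, the final contradiction. The principal obstacle is this last step: unlike $\acd$ in Lemma~\ref{cp}, ``even degree'' is not multiplicative across the central-product bijection, so the factorization argument of Theorem~A's final case does not port over verbatim and one has to track even and odd contributions from $\Irr(M\vert\lambda)$ and $\Irr(C\vert\lambda)$ separately.
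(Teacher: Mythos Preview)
Your overall architecture matches the paper's: reduce to a minimal counterexample, split on whether the chosen minimal normal $N\subseteq G'$ is non-abelian or abelian, and in the abelian case land in the central product $G=MC$ with $M\cong\SL_2(5)$. Two points deserve comment.

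\medskip

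\textbf{Case $N$ non-abelian.} Your ``even-degree strengthening'' of Lemma~\ref{lemma MagaardTongViet} is unnecessary, and the paper avoids it. You only ever use the Gallagher injection on the degree-$2$ characters of $G/N$, and for those the product $\beta\psi$ has degree $2\theta(1)$, which is even no matter what parity $\theta(1)$ has. So the original Lemma~\ref{lemma MagaardTongViet} (giving $\theta(1)\geq 5$) already yields $n_2(G)=n_2(G/N)\leq\sum_{d\geq 10,\ d\ \text{even}}n_d(G)$, and then
\[
8\,n_2(G)\;\leq\;\sum_{d\geq 10,\ d\ \text{even}}(5d-18)\,n_d(G)\;\leq\;\sum_{d\geq 4,\ d\ \text{even}}(5d-18)\,n_d(G),
\]
which is the contradiction you want. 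There is no need to produce an $\Aut(S)$-extendible character of \emph{even} degree for every simple $S$.

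\medskip

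\textbf{Case $N$ abelian: the final step.} You correctly flag the obstacle and then defer to ``a direct computation''; but the method you sketch, an even-degree analogue of Lemma~\ref{cp} summed over $\lambda\in\Irr(\bZ(M))$, does not close by itself, because the even-degree characters of $G$ over the principal $\lambda$ mix the single even-degree class of $M/\bZ(M)\cong\Al_5$ (degree $4$) with the unknown even-degree structure of $C/\bZ(M)$, and no clean lower bound of the form $\acde(G|\lambda)\geq\acde(M|\lambda)$ is available. The paper instead does an explicit count: from the central-product bijection and the degree list of $\SL_2(5)$ one gets
\[
n_2(G)=n_2(C/Z)+2\,n_1(G),\qquad n_4(G)\geq 2\,n_1(G),\qquad n_6(G)\geq 2\,n_2(G)-3\,n_1(G),
\]
the first of which gives $n_2(G)\geq 2\,n_1(G)$. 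Then
\[
2\,n_4(G)+12\,n_6(G)\;\geq\;24\,n_2(G)-32\,n_1(G)\;=\;8\,n_2(G)+16\bigl(n_2(G)-2\,n_1(G)\bigr)\;\geq\;8\,n_2(G),
\]
which is exactly the inequality needed. So your plan is sound, but the endgame is a targeted inequality among $n_2,n_4,n_6$ rather than a parity-partitioned factorisation.
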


\begin{proof} By way of contradiction, assume that
$G$ is non-solvable with $\acde(G)<3.6$. Let $M\lhd G$ be minimal
such that $M$ is non-solvable. Then $M$ is perfect and contained in
the last term of the derived series of $G$. Let $N\subseteq M$ be a
minimal normal subgroup of $G$. We then have $N \subseteq
M=M'\subseteq G'$.

Suppose first that $N$ is non-abelian. By Lemma~\ref{lemma
MagaardTongViet}, there exists $\theta\in \Irr(N)$ such that
$\theta(1)\geq 5$ and $\theta$ extends to $\psi\in\Irr(G)$. We
deduce from Gallagher's theorem that there is an injection from
$\Irr(G/N)$ to the set of irreducible characters of $G$ of degree
at least 5.  We have
\[n_2(G/N)\leq \sum_{d\geq 10 \text{ even}}n_d(G).\]
We have already seen in the proof of Proposition~\ref{proposition 1}
that $N$ is contained in the kernel of every irreducible character
of $G$ of degree 2. In other words, we have $n_2(G)=n_2(G/N)$ and it
follows that
\[n_2(G)\leq \sum_{d\geq 10 \text{ even}}n_d(G).\] Hence
\[8n_2(G)\leq \sum_{d\geq 10 \text{ even}}8n_d(G)\leq \sum_{d\geq 4 \text{ even}}(5d-18)n_d(G),\]
which in turn yields
\[\sum_{d \text{ even}}18n_d(G)\leq \sum_{d \text{ even}}5dn_d(G).\]
It follows that
\[\acde(G)=\frac{\sum_{d \text{ even}}dn_d(G)}{\sum_{d \text{ even}}n_d(G)}\geq
\frac{18}{5}=3.6,\] and this violates the hypothesis.

Now we can assume that $N$ is abelian. Since $G$ is non-solvable,
the quotient $G/N$ is non-solvable as well. By the induction
hypothesis, we have $\acde(G/N)\geq 3.6$. As $\acde(G)<3.6$, we
deduce that $n_2(G)>n_2(G/N)$, and hence there exists
$\chi\in\Irr(G)$ such that $\chi(1)=2$ and $N\nsubseteq \Ker(\chi)$.
Following the proof of Theorem A, we can show
that $G=MC$ is a central product with the amalgamated subgroup
$Z:=\Center(M)=N\cong C_2$, where
$C/\Ker(\chi):=\Center(G/\Ker(\chi))$ and $M=\SL_2(5)$. Furthermore,
there is a bijection $\Irr(M|Z)\times \Irr(C|Z)\rightarrow
\Irr(G|Z)$, where if $(\alpha,\beta)\mapsto \psi$ then
$\psi(1)=\alpha(1)\beta(1)$. Employing the arguments in the proof
of~\cite[Theorem~2.2]{Isaacs-Loukaki-Moreto}, we can evaluate the
number of irreducible characters of $G$ at each even degree. Indeed,
we only need to focus on $n_2(G)$, $n_4(G)$, and $n_6(G)$.

Recall that $\chi(1)=2$ and $\chi\in\Irr(G|Z)$. Therefore, if
$(\alpha,\beta)\mapsto \chi$, we must have $\beta(1)=1$ since
$\alpha(1)\geq 2$. So $\beta\in\Irr(C|Z)$ is an extension of the
unique non-principal linear character of $Z$. By Gallagher's
theorem, we obtain a degree-preserving bijection from $\Irr(C/Z)$ to
$\Irr(C|Z)$. In particular,
\[n_1(C|Z)=n_1(C/Z)=n_1(G/M)=n_1(G).\] Moreover, as $n_1(M|Z)=0$ and
$n_2(M|Z)=2$, we have
\[n_2(G|Z)=n_1(M|Z)n_2(C|Z)+n_2(M|Z)n_1(C|Z)=2n_1(C|Z)=2n_1(G).\]
Next, as $n_1(M/Z)=1$ and $n_2(M/Z)=0$, we have
\[n_2(G/Z)=n_1(M/Z)n_2(C/Z)+n_2(M/Z)n_1(C/Z)=n_2(C/Z).\] It follows
that
\[n_2(G)=n_2(G/Z)+n_2(G|Z)=n_2(C/Z)+2n_1(G),\] which implies that
$n_2(C/Z)=n_2(G)-2n_1(G)$ and in particular $n_2(G)\geq n_1(G)$.

We now evaluate $n_4(G)$. We have
\[n_4(G|Z)\geq n_4(M|Z)n_1(C|Z)=n_1(C|Z)=n_1(G)\] since
$n_4(M|Z)=1$, and
\[n_4(G/Z)\geq n_4(M/Z)n_1(C/Z)=n_1(C/Z)=n_1(G).\]
We deduce that
\[n_4(G)=n_4(G|Z)+n_4(G/Z)\geq 2n_1(G).\]

Next, we evaluate $n_6(G)$. We have
\[n_6(G|Z)\geq n_6(M|Z)n_1(C|Z)=n_1(C|Z)=n_1(G)\] since
$n_6(M|Z)=1$, and
\[n_6(G/Z)\geq n_3(M/Z)n_2(C/Z)=2n_2(C/Z)=2(n_2(G)-2n_1(G)).\]
We also deduce that
\[n_6(G)=n_6(G|Z)+n_6(G/Z)\geq n_1(G)+2(n_2(G)-2n_1(G))=2n_2(G)-3n_1(G).\]

Combining the inequalities for $n_4(G)$ and $n_6(G)$, we have
\begin{align*}\sum_{d\geq 4 \text{ even}}(5d-18)n_d(G)&\geq 2n_4(G)+12n_6(G)\\
&\geq 2(2n_1(G))+12(2n_2(G)-3n_1(G))\\
&=24n_2(G)-32n_1(G)\\
&=8n_2(G)+16(n_2(G)-2n_1(G))\\
&\geq 8n_2(G),
\end{align*}
where the last inequality is due to the fact that $n_2(G)\geq
2n_1(G)$. Now, as above, we deduce that $\sum_{d \text{
even}}18n_d(G)\leq \sum_{d \text{ even}}5dn_d(G)$ and hence
$\acde(G)\geq 3.6$. This final contradiction completes the proof.
\end{proof}

To prove the predictions involving $\acd_{p'}$ that we mentioned in
the introduction, it would be useful to have a version of
Lemma~\ref{lemma MagaardTongViet} where we can guarantee the
existence of an extendible character of $p'$-degree. According to
Theorem~4.1 of~\cite{imn}, if $S$ is a simple group and $p>3$ then
there is an irreducible character of $S$ of $p'$-degree that is
$\Aut(S)$-invariant. Therefore it seems reasonable to hope that such
a strong form of Lemma~\ref{lemma MagaardTongViet} may be true. If
this were true then it would be enough to mimic the proof of
Theorem~A to obtain that if $p>5$ and $\acd_{p'}(G)<16/5$, then $G$
is solvable. However, this is not true for $p=2$ or $3$: it turns
out that all the non-principal characters of $\PSL_3(4)$ extendible
to $\Aut(\PSL_3(4))$ have even degree and the only non-principal
character of $\PSL_2(9)$ extendible to $\Aut(\PSL_2(9))$ is the
Steinberg character.

\section*{Acknowledgement} We are grateful to the referee
for helpful suggestions that have significantly improved the
exposition of the paper.


\end{document}